\DeclareSymbolFont{bbold}{U}{bbold}{m}{n}
\DeclareSymbolFontAlphabet{\mathbbold}{bbold}
\newtheorem{thm}{Theorem}
\newtheorem{lem}[thm]{Lemma}
\newtheorem{prop}[thm]{Proposition}
\theoremstyle{definition}
\newtheorem{rmk}[thm]{Remark}
\newtheorem{ex}[thm]{Example}
\newcommand\arxiv[1]{\href{http://arxiv.org/abs/#1}{arXiv:#1}}
\newcommand\bfu{\mathbf u}
\newcommand\defi[1]{\emph{#1}}
\newcommand\onevector{\mathbbold 1}
\newcommand\rank{\operatorname{rank}}
\newcommand\sep{\operatorname{sep}}
\newcommand\supp{\operatorname{supp}}
\newcommand\QQ{\mathbb{Q}}
\newcommand\ZZ{\mathbb{Z}}
\newcommand\val{\operatorname{val}}
\newcommand\vecC{\mathbf C}
\newcommand\vecCco{\vecC^{\operatorname{co}}}
\title[Construction of the Lindstr\"om valuation]{Construction of the
Lindstr\"om valuation \\ of an algebraic extension}
\author{Dustin Cartwright}
\address{Department of Mathematics \\ University of Tennessee \\
         227 Ayres Hall \\ Knoxville, TN 37996}
\email{cartwright@utk.edu}
\begin{document}

\begin{abstract}
Recently, Bollen, Draisma, and Pendavingh have introduced the Lindstr\"om
valuation on the algebraic matroid of a field extension of characteristic~$p$.
Their construction passes through what they call a matroid flock and builds on
some of the associated theory of matroid flocks which they develop. In this
paper, we give a direct construction of the Lindstr\"om valuated matroid
using the theory of inseparable field extensions. In particular, we
give a description of the valuation, the valuated circuits, and the valuated
cocircuits.
\end{abstract}

\maketitle

The algebraic matroid of a field extension records which subsets of a fixed set
of elements of the field are algebraically independent. In characteristic~0, the
algebraic matroid coincides with the linear matroid of the vector configuration
of differentials, and, as a consequence the class of matroids with algebraic
realizations over a field of characteristic 0 is exactly equivalent to the class
of matroids with linear realizations in characteristic~0~\cite{ingleton}.
However, in positive characteristic, there are strictly more algebraic matroids
than linear matroids, and without an equivalence to linear matroids, the class
of algebraic matroids is not well understood.

Pioneering work of Lindstr\"om has shown the power of first applying well-chosen
powers of the Frobenius morphism to the field elements, before taking
differentials. In particular, he constructed an infinite family of matroids
(the Fano matroid among them) for which any algebraic realization over a
field of finite characteristic, after applying appropriate powers of Frobenius
and taking differentials, yields a linear representation of the same
matroid~\cite{lindstrom}.

In general, no single choice of powers of Frobenius may capture the full
algebraic matroid, and so Bollen, Draisma, and Pendavingh went one step further
by looking at the matroids of differentials after all possible powers of
Frobenius applied to the chosen field elements~\cite{bollen-draisma-pendavingh}.
These matroids fit together to form what they call a \defi{matroid flock}, and
they show that a matroid flock is equivalent to a valuated
matroid~\cite{bollen-draisma-pendavingh}*{Thm.~7}. Therefore, the matroid flock
of differentials defines a valuation on the algebraic matroid of the field
extension, called the \defi{Lindstr\"om valuation} of the algebraic matroid. In
this paper we give a direct construction of this valuation, without reference to
matroid flocks.

We now explain the construction of the Lindstr\"om valuation of an algebraic
matroid. Throughout this paper, we will work with an extension of fields $L
\supset K$ of characteristic $p>0$ as well as fixed elements $x_1, \ldots, x_n
\in L$. We also assume that $L$ is a finite extension of $K(x_1, \ldots, x_n)$,
for example, by replacing $L$ with $K(x_1, \ldots, x_n)$. The algebraic matroid
of this extension can be described in terms of its bases, which are subsets $B
\subset E = \{1, \ldots, n\}$ such that the extension of~$L$ over $K(x_B) =
K(x_i : i \in B)$ is algebraic. We recall from~\cite{lang}*{Sec.~V.6} that if
$K(x_B)^{\sep}$ denotes the set of elements of~$L$ which are separable over
$K(x_B)$, then $L$ is a purely inseparable extension of $K(x_B)^{\sep}$, and the
degree of this extension, $[L : K(x_B)^{\sep}]$ is called the \defi{inseparable
degree} and denoted by adding a subscript: $[L : K(x_B)]_i$.

Now, we define a valuation on the algebraic matroid of $L$ as the following
function $\nu$ from the set of bases to $\ZZ$:
\begin{equation}\label{eq:valuation}
\nu(B) = \log_p [L : K(x_B)]_i.
\end{equation}
Note that $\nu(B)$ is finite because we assumed that $L$ was a finitely generated
algebraic extension of $K(x_B)$ and it is an integer because $[L : K(x_B)]_i$ is
the degree of a purely inseparable extension, which is always a power
of~$p$~\cite{lang}*{Cor.~V.6.2}.

\begin{thm}\label{t:agree}
The function $\nu$ in \eqref{eq:valuation} defines a valuation on the algebraic
matroid of $L \supset K$, such that the associated matroid flock is the matroid
flock of the extension.
\end{thm}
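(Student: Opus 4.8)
The plan is to prove that, for every weight vector $\alpha \in \ZZ^E$, the matroid $M_\alpha$ in the matroid flock of the extension equals the initial matroid $\operatorname{in}_\alpha\nu$, whose bases are the bases $B$ minimizing $\nu(B) + \sum_{i \in B}\alpha_i$ (I fix this sign convention; that of \cite{bollen-draisma-pendavingh} may differ by $\alpha \mapsto -\alpha$). Because replacing $\alpha$ by $\alpha + \onevector$ shifts $\sum_{i \in B}\alpha_i$ by the constant $|B|$ and so changes neither $\operatorname{in}_\alpha\nu$ nor $M_\alpha$, it suffices to treat $\alpha$ with all $\alpha_i \ge 0$, so that $x_i^{p^{\alpha_i}} \in L$ for every $i$. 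The first step — and the one I expect to be the crux — is to recast $M_\alpha$ in purely field-theoretic terms. Over the perfect closure $\widetilde K := K^{1/p^\infty}$, every finitely generated extension is separably generated, so the extension $\widetilde K(x_i^{p^{\alpha_i}} : i \in E)/\widetilde K$ is; consequently a basis $B$ is a basis of $M_\alpha$ exactly when $\{d(x_i^{p^{\alpha_i}}) : i \in B\}$ is linearly independent in $\Omega_{\widetilde K(x_i^{p^{\alpha_i}} : i)/\widetilde K}$, equivalently when $\widetilde K(x_i^{p^{\alpha_i}} : i \in E)$ is separable over $\widetilde K(x_i^{p^{\alpha_i}} : i \in B)$. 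Using that $\widetilde K/K$ is purely inseparable while $K(x_B)/K$ is purely transcendental, so that $\widetilde K$ and $K(x_B)$ are linearly disjoint over $K$, one rephrases this condition once more: $B$ is a basis of $M_\alpha$ if and only if $[\,L : K(x_i^{p^{\alpha_i}} : i \in B)\,]_i$ is minimal among all bases.

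Given this description, comparing $M_\alpha$ with $\operatorname{in}_\alpha\nu$ reduces to a computation with towers of fields. For any basis $B$ we have $K(x_i^{p^{\alpha_i}} : i \in B) \subseteq K(x_B) \subseteq L$, and since the $x_i$ with $i \in B$ are algebraically independent over $K$, adjoining them one at a time shows that $K(x_B)$ is purely inseparable of degree $p^{\sum_{i \in B}\alpha_i}$ over $K(x_i^{p^{\alpha_i}} : i \in B)$. By multiplicativity of the inseparable degree in towers (\cite{lang}*{Sec.~V.6}),
\[
  \log_p [\,L : K(x_i^{p^{\alpha_i}} : i \in B)\,]_i \;=\; \log_p [\,L : K(x_B)\,]_i + \sum_{i \in B}\alpha_i \;=\; \nu(B) + \sum_{i \in B}\alpha_i ,
\]
so the bases minimizing the left-hand side are precisely the bases minimizing $\nu(B) + \sum_{i \in B}\alpha_i$; that is, $M_\alpha = \operatorname{in}_\alpha\nu$.

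It remains to assemble the conclusion. Since $M_\alpha = \operatorname{in}_\alpha\nu$ is a matroid for every $\alpha \in \ZZ^E$, the function $\nu$ satisfies the valuated basis exchange axiom and thus defines a valuation on the algebraic matroid: this is exactly the content, in one direction, of the cryptomorphism between valuated matroids and matroid flocks of \cite{bollen-draisma-pendavingh}*{Thm.~7}, under which a function on the bases whose initial sets $\operatorname{in}_\alpha$ are all matroids is a valuated matroid with associated flock $\alpha \mapsto \operatorname{in}_\alpha\nu$. By construction that flock is $\alpha \mapsto M_\alpha$, the matroid flock of the extension, proving the theorem. The point I still regard as delicate is the passage to $\widetilde K$ in the first paragraph when $K$ is imperfect: one must verify that it changes neither the algebraic matroid nor — thanks to the linear disjointness — the relevant inseparable degrees except by a factor independent of $B$, and that the module of differentials over $\widetilde K$ really has rank equal to the transcendence degree, which uses the extraction of a separating transcendence basis from the generators $x_i^{p^{\alpha_i}}$.
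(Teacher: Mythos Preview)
Your approach is correct and genuinely different from the paper's, though the core computation---multiplicativity of inseparable degree in the tower $K(x_i^{p^{\alpha_i}}:i\in B)\subset K(x_B)\subset L$---is essentially the same identity the paper uses (with $\alpha$ replaced by $-\alpha$ and the top field taken to be $K(F^{-\alpha}x_E)$ rather than $L$). The organizational difference is substantial: the paper first proves that $\nu$ is a valuation by an independent route, constructing valuated circuits from circuit polynomials and verifying the Murota--Tamura axioms (Propositions~\ref{p:circuit-axioms} and~\ref{p:valuation-circuits}), and only then compares matroid flocks; you instead prove $M_\alpha=\operatorname{in}_\alpha\nu$ directly and extract the fact that $\nu$ is a valuation from the flock--valuation cryptomorphism of \cite{bollen-draisma-pendavingh}*{Thm.~7}. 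Your argument is shorter and avoids the circuit machinery entirely, but the paper's route yields the explicit description of the valuated circuits as a byproduct, which is arguably the paper's main point (and what makes the Lindstr\"om valuation computable, Remark~\ref{r:computing}). One small gap worth tightening: the statement you invoke---that a function $\nu$ whose initial sets $\operatorname{in}_\alpha\nu$ form a matroid flock is itself a valuation---is not quite the literal content of \cite{bollen-draisma-pendavingh}*{Thm.~7}, which produces \emph{some} valuation from a flock; you still need the easy observation that two functions with identical initial matroids at every $\alpha$ differ by a constant. Your concern about imperfect $K$ is well placed: passing to $\widetilde K$ can genuinely change the matroid of differentials (e.g.\ take $K=\mathbb F_p(s)$, $x_1=t$, $x_2=t+s^{1/p}$), so your $M_\alpha$ over $\widetilde K$ is not literally the paper's $N_\alpha$ over $K$; but the paper's own argument has the same tacit perfectness assumption, inherited from \cite{bollen-draisma-pendavingh}, so this is not a defect peculiar to your proof.
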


In addition to the valuation given in~\eqref{eq:valuation}, we give
descriptions of the valuated circuits of the Lindstr\"om
valuated matroid in the beginning of Section~\ref{sec:valuated-matroid} and of
the valuated cocircuits and minors in Section~\ref{sec:cocircuits-minors}. The
description of the
circuits gives an algorithm for computing the Lindstr\"om valuated matroid using
Gr\"obner bases, assuming that $L$ is finitely generated over a prime field
(see Remark~\ref{r:computing} for details).

\begin{rmk}\label{r:sign-convention}
There are two different sign conventions used in the literature on valuated
matroids. We use the convention which is compatible with the ``min-plus''
convention in tropical geometry, which is the opposite of what was used in the
original paper of Dress and Wenzel~\cite{dress-wenzel}, but is consistent
with~\cite{bollen-draisma-pendavingh}.
\end{rmk}

\subsection*{Acknowledgments}
I'd like to thank Jan Draisma for useful discussion about the results
in~\cite{bollen-draisma-pendavingh}, which prompted this paper, Rudi Pendavingh
for suggesting the results appearing in
Section~\ref{sec:cocircuits-minors}, and Felipe Rinc\'on for helpful feedback.
The author was supported by NSA Young
Investigator grant H98230-16-1-0019.

\section{The Lindstr\"om valuated matroid}\label{sec:valuated-matroid}

In this section, we verify that the function~\eqref{eq:valuation} from the
introduction is a valuation on the algebraic matroid of the extension $L \supset
K$ and the elements $x_1, \ldots, x_n$. We do this by first constructing the
valuated matroid in terms of its valuated circuits, and then showing that the
corresponding valuation agrees with the function~\eqref{eq:valuation}.
Throughout the rest of the paper, we will use $F$ to denote the Frobenius
morphism $x \mapsto x^p$.

Recall that a (non-valuated) circuit of the algebraic matroid of the elements
$x_1, \ldots, x_n$ in the extension $L \supset K$ is an inclusion-wise minimal
set $C \subset E$ such that $K(x_{C})$ has transcendence degree $\lvert C \rvert
- 1$ over $K$. Therefore, there is a unique (up to scaling) polynomial
relation among the~$x_i$, which we call the \defi{circuit polynomial},
following~\cite{kiraly-rosen-theran}. More precisely, we let $K[X_C]$ be the
polynomial ring whose variables are denoted $X_i$ for $i \in C$. The
aforementioned circuit polynomial is a (unique up to scaling) generator $f_C$ of the kernel of the
homomorphism $K[X_C] \rightarrow K(x_C)$ which sends $X_i$ to $x_i$. We write
this polynomial:
\begin{equation*}
f_{C} = \sum_{\bfu \in J} c_{\bfu} X^{\bfu} \in K[X_C] \subset K[X_E]
\end{equation*}
where $J \subset \ZZ_{\geq 0}^n$ is a finite set of exponents and $c_{\bfu} \neq 0$
for all $\bfu \in J$. Then, we
define $\vecC(f_{C})$ to be the vector in $(\ZZ \cup \{\infty\})^n$ with
components:
\begin{equation}\label{eq:c-function}
\vecC(f_{C})_i =
\min \{\val_p \bfu_i \mid \bfu \in J, \bfu_i \neq 0 \},
\end{equation}
where $\val_p \bfu_i$ denotes the $p$-adic valuation, which is defined to be the
power of~$p$ in the prime factorization of the positive integer $\bfu_i$. If
$\bfu_i = 0$
for all $\bfu \in J$, then we take $\vecC(f_C)_i$ to be $\infty$.
For any vector $\vecC \in (\ZZ \cup
\{\infty\})^n$, the \defi{support} of~$\vecC$, denoted $\supp \vecC$, is the set
$\{i \in E \mid \vecC_i < \infty\}$.  Since $f_C$ is a polynomial in the
variables~$X_i$
for $i \in C$, but not in any proper subset of them, the support of $\vecC(f_C)$
is exactly the circuit~$C$.

We will take the valuated circuits of the Lindstr\"om valuation to be the set of
vectors:
\begin{equation}\label{eq:valuated-circuits}
\mathcal C = \{\vecC(f_C) + \lambda \onevector \mid \mbox{$C$ is a circuit of $L
\supset K$},
\lambda \in \ZZ\}
\subset (\ZZ \cup \{\infty\})^n,
\end{equation}
where $\onevector$ denotes the vector $(1, \ldots, 1)$.
Before verifying that this collection of vectors satisfies the axioms, we prove
the following preliminary lemma relating the
definition in~\eqref{eq:c-function} to the inseparable
degree:
\begin{lem}\label{l:poly-insep-degree}
Let $S \subset E$ be a set of rank $\lvert S \rvert - 1$, and let $C$ be the
unique circuit contained in $S$. If we abbreviate the vector $\vecC(f_C)$ as
$\vecC$, then
\begin{equation*}
[K(x_{S}) : K(x_{S \setminus \{i\}})]_i = p^{\vecC(f)_i}
\end{equation*}
for any $i \in C$.
In particular, $K(x_{S})$ is a separable extension of $K(x_{S \setminus
\{i\}})$ if and only if $\vecC_i = 0$.
\end{lem}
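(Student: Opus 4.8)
The plan is to reduce the computation of $[K(x_S):K(x_{S\setminus\{i\}})]_i$ to the structure of the minimal polynomial of $x_i$ over $K(x_{S\setminus\{i\}})$. First, since $S$ has rank $|S|-1$ and its unique circuit $C$ contains $i$, the set $S\setminus\{i\}$ contains no circuit and is therefore independent; hence the elements $x_j$ for $j\in S\setminus\{i\}$ are algebraically independent over $K$, while $x_i$ is algebraic over $K(x_{S\setminus\{i\}})$ because the rank does not increase, so $K(x_S)=K(x_{S\setminus\{i\}})(x_i)$ is a finite simple extension. The algebraic input I would then invoke is the standard description of the inseparable degree of such an extension: if $m(t)$ is the minimal polynomial of $\alpha$ over a field $k$ of characteristic $p$, and $e\ge 0$ is the largest integer with $m(t)=g(t^{p^e})$ for some $g\in k[t]$, then $g$ is both irreducible and separable, so $\alpha^{p^e}$ is separable over $k$ with $[k(\alpha^{p^e}):k]=\deg g$, while $\alpha$ is purely inseparable of degree $p^e$ over $k(\alpha^{p^e})$ (it is a root of $t^{p^e}-\alpha^{p^e}=(t-\alpha)^{p^e}$); by multiplicativity of the inseparable degree in towers this gives $[k(\alpha):k]_i=p^e$ (cf.\ \cite{lang}*{Sec.~V.6}).

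Next I would identify the minimal polynomial of $x_i$ over $K(x_{S\setminus\{i\}})$, up to a scalar, with the polynomial $\tilde f$ obtained from $f_C$ by substituting $x_j$ for $X_j$ with $j\in C\setminus\{i\}$. Since $f_C$ generates the prime kernel of $K[X_C]\to K(x_C)$ it is irreducible in $K[X_C]=K[X_{C\setminus\{i\}}][X_i]$, and because $i\in C$ it genuinely involves $X_i$, hence is primitive as a polynomial in $X_i$; by Gauss's lemma it remains irreducible over the fraction field $K(X_{C\setminus\{i\}})$, which the algebraic independence of the $x_j$, $j\in C\setminus\{i\}$, identifies with $K(x_{C\setminus\{i\}})$ via $X_j\mapsto x_j$, so $\tilde f$ is irreducible over $K(x_{C\setminus\{i\}})$. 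It stays irreducible over $K(x_{S\setminus\{i\}})$, which is purely transcendental over $K(x_{C\setminus\{i\}})$ precisely because $S\setminus\{i\}$ is independent, and since $\tilde f(x_i)=f_C(x_C)=0$, the polynomial $\tilde f$ is a scalar multiple of the minimal polynomial of $x_i$.

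To conclude, I would note that grouping $f_C$ by powers of $X_i$ and then substituting $X_j\mapsto x_j$ involves no cancellation, since the map $K[X_{C\setminus\{i\}}]\to K(x_{C\setminus\{i\}})$ is injective; hence the exponents of $X_i$ occurring in $\tilde f$ are exactly $\{\bfu_i\mid\bfu\in J\}$, and the largest $e$ with $\tilde f\in K(x_{S\setminus\{i\}})[X_i^{p^e}]$ equals $\min\{\val_p\bfu_i\mid\bfu\in J,\ \bfu_i\ne 0\}=\vecC_i$. The inseparable-degree fact above then yields $[K(x_S):K(x_{S\setminus\{i\}})]_i=p^{\vecC_i}$, and the final assertion of the lemma is the case $\vecC_i=0$ of inseparable degree $1$. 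I expect the main obstacle to be the identification of the minimal polynomial --- in particular, checking that irreducibility genuinely survives both the Gauss's-lemma step and the purely transcendental base change --- rather than the concluding bookkeeping with $p$-adic valuations.
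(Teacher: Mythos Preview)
Your proof is correct and follows essentially the same route as the paper: both factor the simple extension $K(x_{S\setminus\{i\}})\subset K(x_S)$ through the intermediate field generated by $x_i^{p^{\vecC_i}}$, observing that the bottom step is separable (because after the substitution $Y_i=X_i^{p^{\vecC_i}}$ the circuit polynomial has nonzero $\partial/\partial Y_i$) and the top step is purely inseparable of degree $p^{\vecC_i}$. You package this tower as the standard lemma on the inseparable degree of a simple extension via the minimal polynomial, and you are more careful than the paper in justifying---via Gauss's lemma and invariance of irreducibility under the purely transcendental base change $K(x_{C\setminus\{i\}})\subset K(x_{S\setminus\{i\}})$---that the substituted circuit polynomial really is the minimal polynomial of $x_i$; the paper simply asserts this.
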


\begin{proof}
For $i \in C$, we let $Y_i$ denote the monomial $X_i^{p^{\vecC_i}}$ in $K[X_S]$.
Then, the polynomial~$f_C$ lies in the polynomial subring $K[X_{S \setminus
\{i\}}, Y_i]$, by the definition of~$\vecC_i$. Similarly, we let $y_i$ denote
the element $x_i^{p^{\vecC_i}} = F^{\vecC_i} x_i$ in $K(x_S)$. Then, $f_C$, as a
polynomial in $K[X_{S \setminus \{i\}}, Y_i]$, is the minimal defining relation
for $K(x_{S\setminus \{i\}}, y_i)$ as an extension of $K(x_{S \setminus
\{i\}})$. By the definition of $\vecC_i$, some term of $f_C$ is of the form
$X^uY_i^a$, where $a$ is not divisible by $p$, and so $\partial f_C/\partial
Y_i$ is a non-zero polynomial. Therefore, $f_C$ is a separable polynomial of
$Y_i$, and so $K(x_{S \setminus \{i\}}, y_i)$ is a separable extension of
$K(x_{S \setminus \{i\}})$.

On the other hand, $K(x_S)$ is a purely inseparable extension of $K(x_{S
\setminus \{i\}}, y_i)$, defined by the minimal relation
\begin{equation*}
x_i^{p^{\vecC_i}} - y_i = 0.
\end{equation*}
Therefore, this extension has degree $p^{\vecC_i}$, which is thus the inseparable
degree $[K(x_S) : K(x_{S \setminus \{x_i\}})]_i$, as desired.
\end{proof}

We now verify that the collection~\eqref{eq:valuated-circuits} satisfies the
axioms of valuated circuits. Several equivalent characterizations of valuated
circuits are given in~\cite{murota-tamura}, and we will use the characterization
in the following proposition:

\begin{prop}[Thm.~3.2 in \cite{murota-tamura}]\label{p:valuated-circuits}
A set of vectors $\mathcal C \subset (\ZZ \cup \{\infty\})^n$ is the set of
\defi{valuated circuits} of a valuated matroid if and only if it satisfies the
following properties:
\begin{enumerate}
\item
The collection of sets $\{\supp \vecC \mid \vecC \in \mathcal C\}$ satisfies
the axioms of the circuits of a non-valuated matroid.
\item
If $\vecC$ is a valuated circuit, then $\vecC + \lambda \onevector$ is a
valuated circuit for all $\lambda \in \ZZ$.
\item
Conversely, if $\vecC$ and $\vecC'$ are valuated circuits with $\supp \vecC =
\supp \vecC'$, then $\vecC = \vecC' + \lambda \onevector$ for some integer
$\lambda$.
\item
Suppose $\vecC$ and $\vecC'$ are in $\mathcal C$ such that
\begin{equation*}
\rank(\supp \vecC \cup \supp \vecC') = \lvert \supp \vecC \cup \supp \vecC'
\rvert - 2,
\end{equation*}
and $u, v \in E$ are elements such that $\vecC_u = \vecC_u'$ and $\vecC_v <
\vecC_v' = \infty$. Then there exists a vector $\vecC'' \in \mathcal C$ such
that $\vecC_u'' = \infty$, $\vecC_v'' = \vecC_v$, and $\vecC_i'' \geq \min\{
\vecC_i, \vecC_i'\}$ for all $i \in E$ .
\end{enumerate}
\end{prop}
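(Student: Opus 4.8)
The plan is to prove the two implications separately, using as the working definition of a valuated matroid a function $v$ from the bases of a matroid $M$ on $E$ to $\ZZ \cup \{\infty\}$, not identically $\infty$, that satisfies the symmetric valuated exchange axiom (in the min--plus convention of Remark~\ref{r:sign-convention}). Given such a $v$, one attaches to each spanning subset $S \subset E$ with $\lvert S\rvert = \rank(M)+1$ --- which contains a unique circuit $C$ --- the vector $\vecC^{S}$ with $\vecC^{S}_i = v(S\setminus\{i\})$ for $i \in S$ and $\vecC^{S}_i = \infty$ otherwise; since $S \setminus \{i\}$ is a basis exactly when $i \in C$, the support of $\vecC^{S}$ is $C$, and the valuated circuits of $(M,v)$ are declared to be all the vectors $\vecC^{S} + \lambda\onevector$.

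For the forward implication, property~(2) is built into this definition, and property~(1) is the classical fact that the circuits of $M$ obey the circuit axioms, combined with the support computation above. Property~(3) --- that the valuated circuit with a given support $C$ is unique up to adding $\lambda\onevector$ --- follows by checking that for $i,j,k \in C$ the differences of coordinates of $\vecC^{S}$ are compatible (a three-term instance of the exchange axiom on bases differing only inside $C$) and then using connectedness of $M$ restricted to $C$ to propagate a single normalization across all of $C$. Property~(4), the valuated circuit elimination axiom, is the substantive point: one selects a basis adapted to $\supp\vecC \cup \supp\vecC'$ and converts the symmetric exchange axiom into the desired inequality $\vecC''_i \ge \min\{\vecC_i,\vecC'_i\}$, the rank hypothesis $\rank(\supp\vecC\cup\supp\vecC') = \lvert\supp\vecC\cup\supp\vecC'\rvert - 2$ ensuring that exactly two circuit relations are being eliminated against one another.

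For the converse, one starts from $\mathcal C$, extracts the underlying matroid $M$ from property~(1), fixes a basis $B_0$, sets $v(B_0)=0$, and propagates: if $B = B_0\setminus\{i\}\cup\{j\}$ is a one-element exchange, take the valuated circuit supported on the fundamental circuit of $j$ over $B_0$, normalize by~(3) so that its $i$-coordinate is $0$, and set $v(B)$ to be its $j$-coordinate; iterating along chains of exchanges defines $v$ on every basis. The crux --- and the step I expect to be the main obstacle --- is showing this is independent of the chain, i.e.\ that traversing any closed loop of exchanges returns the same value; this is proved by induction on loop length, reducing to reconciling the two sides of an elementary square via the elimination axiom~(4), which is exactly what that axiom is engineered to do. Once $v$ is well-defined, it remains to check it is finite on bases, that running the forward construction on $v$ recovers $\mathcal C$ up to translation, and that $v$ satisfies the full symmetric exchange axiom, the last again by decomposing a general exchange into elementary ones controlled by~(4). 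I note that this proposition is quoted verbatim from \cite{murota-tamura}*{Thm.~3.2} and is used below as a black box, so in the paper itself no proof is given.
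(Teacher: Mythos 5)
You correctly observe that the paper gives no proof of this proposition at all: it is quoted from Murota and Tamura (their Thm.~3.2) and used as a black box, so there is no argument in the paper to compare yours against, and your closing remark accurately reflects how the result is treated. Your sketch does follow the same general line as Murota--Tamura's own development: attach to each spanning set $S$ with $\lvert S\rvert = \rank(M)+1$ the vector of values $v(S\setminus\{i\})$ to get the valuated circuits, and conversely rebuild $v$ from the circuit vectors by propagating along single-element basis exchanges.

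As a proof, however, the proposal is only a plan: the two steps that carry the entire content of the theorem are asserted rather than argued. In the forward direction, deriving the local elimination property~(4) from the basis exchange axiom is compressed into one sentence (``select a basis adapted to $\supp\vecC\cup\supp\vecC'$ and convert the symmetric exchange axiom into the desired inequality''), with no indication of how the bound $\vecC''_i \geq \min\{\vecC_i,\vecC'_i\}$ or the conditions $\vecC''_u=\infty$, $\vecC''_v=\vecC_v$ are actually extracted; in the converse, well-definedness of $v$ around closed loops of exchanges and the verification of the exchange axiom are precisely the hard part, and ``induction on loop length, reducing to an elementary square via~(4)'' is not carried out --- in Murota--Tamura this is a chain of lemmas passing from the local elimination axiom to global statements. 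There is also an orientation slip worth flagging: in the min-plus convention of Remark~\ref{r:sign-convention}, the relation $\nu(B)+\vecC_u=\nu(B\setminus\{u\}\cup\{v\})+\vecC_v$ means that, having set $\nu(B_0)=0$ and taken the fundamental circuit $\vecC$ of $j$ over $B_0$, you should normalize the $j$-coordinate to $0$ and read off $\nu(B_0\setminus\{i\}\cup\{j\})$ from the $i$-coordinate (or take the negative of what you wrote); as stated, your recipe produces $-\nu$, i.e.\ the Dress--Wenzel sign convention rather than the one used here. None of this affects the paper, which relies on the cited theorem, but the sketch should not be mistaken for a self-contained proof of it.
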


The first property from Proposition~\ref{p:valuated-circuits} is
equivalent to axioms $\mathrm{VC1}$, $\mathrm{VC2}$, and $\mathrm{MCE}$
from~\cite{murota-tamura} and the three after that are denoted $\mathrm{VC3}$,
$\mathrm{VC3_e}$, $\mathrm{VCE_{loc1}}$, respectively.

\begin{prop}\label{p:circuit-axioms}
The collection~$\mathcal C$ of vectors given in~\eqref{eq:valuated-circuits} 
defines the valuated circuits of a valuated matroid.
\end{prop}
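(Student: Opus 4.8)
The plan is to check the four properties in Proposition~\ref{p:valuated-circuits}; the first three are essentially formal. For~(1), the collection $\{\supp \vecC : \vecC \in \mathcal C\}$ is exactly the set of circuits of the algebraic matroid of $L \supset K$, since $\supp(\vecC(f_C)+\lambda\onevector) = C$ and the algebraic matroid is a matroid, so these sets obey the circuit axioms. Property~(2) is immediate from the definition of $\mathcal C$. For~(3), if $\vecC, \vecC' \in \mathcal C$ share a support $C$, then $C$ is a circuit; the circuit polynomial $f_C$ is unique up to a nonzero scalar because the kernel of $K[X_C] \to K(x_C)$ is a height-one prime in a polynomial ring over a field, hence principal, and $\vecC(f_C)$ is unchanged under rescaling; thus $\vecC$ and $\vecC'$ are both $\vecC(f_C)$ plus an integer multiple of $\onevector$, so $\vecC - \vecC' \in \ZZ\onevector$.

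The real content is property~(4). Write $C = \supp\vecC$, $C' = \supp\vecC'$. If $u \notin C\cup C'$, take $\vecC'' = \vecC$; since $\vecC_u = \vecC'_u$ the remaining case has $u \in C\cap C'$, and then $v \in C\setminus C'$ because $\vecC_v < \vecC'_v = \infty$. In particular $C \neq C'$, so neither of these circuits contains the other. Put $S = C\cup C'$, which by hypothesis has nullity two. Since $u$ lies on a circuit of $S$ it is not a coloop, so $S\setminus u$ has nullity one and hence a unique circuit $C''$; applying strong circuit elimination to $C$, $C'$, the common element $u$ and the element $v \in C\setminus C'$ yields a circuit inside $(C\cup C')\setminus u = S\setminus u$ through $v$, which must be $C''$. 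Thus $u \notin C''$ and $v \in C''$. Our candidate is $\vecC'' := \vecC(f_{C''}) + \lambda''\onevector$ with $\lambda''$ chosen so that $\vecC''_v = \vecC_v$; then $\vecC''_u = \infty$ automatically, and it remains to check $\vecC''_i \ge \min\{\vecC_i,\vecC'_i\}$ for all $i$.

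Writing $r_D(a,b) := \vecC(f_D)_a - \vecC(f_D)_b$ for a circuit $D$ with $a, b \in D$ (and $r_D(a,b) := \infty$ when $a \notin D$), and unwinding the normalizations $\vecC_u = \vecC'_u$ and $\vecC''_v = \vecC_v$, the required inequality is equivalent to
\begin{equation*}
r_{C''}(i,v)\ \ge\ \min\bigl\{\,r_C(i,v),\ \ r_{C'}(i,u)+r_C(u,v)\,\bigr\}\qquad\text{for all }i.
\end{equation*}
Since $C''\subseteq S$ this is automatic for $i\notin S$, and since $v\in C''$ and $u\notin C''$ the only cases requiring work are $i\in C\cap C''$, where it reads $r_{C''}(i,v)\ge r_C(i,v)$, and $i\in(C'\cap C'')\setminus C$, where it reads $r_{C''}(i,v)\ge r_{C'}(i,u)+r_C(u,v)$. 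Applying Lemma~\ref{l:poly-insep-degree} with the rank-deficient set taken to be the circuit $D$ itself identifies $\vecC(f_D)_a$ with $\log_p[K(x_D):K(x_{D\setminus a})]_i$, so each $r_D(a,b)$ becomes a difference of logarithms of inseparable degrees of one-element extensions between subfields of $K(x_S)$. I would then deduce the two displayed inequalities by resolving the relevant extensions into towers of \emph{algebraic} extensions whose intermediate fields all have the same transcendence degree (so that inseparable degree is multiplicative), combined with the submultiplicativity $[M_1M_2:M_1]_i\le[M_2:M_1\cap M_2]_i$ of inseparable degree under composita.

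I expect this last step to be the main obstacle. The delicate point is that inseparable degree is well behaved only in towers of algebraic extensions, whereas the obvious candidates for a ``common base'' field---such as $K(x_{S\setminus\{u,v,i\}})$---have transcendence degree strictly below that of $K(x_S)$ and so lie in a transcendental extension; one must instead compare through subfields of $K(x_S)$ of the same transcendence degree as $K(x_S)$, for instance $K(x_{S\setminus u})$, $K(x_{S\setminus w})$ and $K(x_{S\setminus v})$ for a fixed $w\in C'\setminus C$---which carry $C''$, $C$ and $C'$ respectively as their unique circuits---and relate $K(x_{C''})$, $K(x_C)$, $K(x_{C'})$ to these. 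By contrast, a resultant computation---writing a suitable multiple of $f_{C''}$ as $\mathrm{Res}_{X_u}(f_C,f_{C'})$ and tracking $p$-adic valuations of monomial exponents---gives the correct divisibility statements for the resultant itself but does not transfer the Frobenius structure to its irreducible factor $f_{C''}$, so the field-theoretic argument appears to be essential.
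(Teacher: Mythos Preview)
Your treatment of properties (1)--(3) is fine, and your setup for~(4) --- identifying the unique circuit $C''\subset S\setminus\{u\}$ through~$v$, choosing $\lambda''$ so that $\vecC''_v=\vecC_v$, and reducing the remaining verification to the two displayed inequalities on the quantities $r_D(a,b)$ --- is correct and matches what actually needs to be shown. The gap is exactly where you say it is: you do not prove those inequalities. You propose to use multiplicativity in towers together with a submultiplicativity $[M_1M_2:M_1]_i\le[M_2:M_1\cap M_2]_i$ for inseparable degree under composita, but this last inequality is neither proved nor standard, and you do not show how to chain such inequalities into the two cases $i\in C\cap C''$ and $i\in (C'\cap C'')\setminus C$. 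As written this is a proof sketch that stops at the hard step.

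The paper bypasses this difficulty by a Frobenius change of variables, and this is the idea you are missing. After normalizing so that $\vecC=\vecC(f)$ and $\vecC'=\vecC(f')$ with the same shift, set $y_i=F^{\min\{\vecC_i,\vecC'_i\}}x_i$ and work with the elements $y_i$ in place of $x_i$. In these new generators one has $\vecC(g)_u=\vecC(g')_u=\vecC(g)_v=0$, where $g,g'$ are the relations satisfied by the $y_i$; by Lemma~\ref{l:poly-insep-degree} this means $K(y_S)\supset K(y_{S\setminus\{v\}})$ and $K(y_{S\setminus\{v\}})\supset K(y_{S\setminus\{u,v\}})$ are both \emph{separable}, so their composite is separable, and hence the circuit polynomial $g''$ for the $y_i$ in $S\setminus\{u\}$ has $\vecC(g'')_v=0$. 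Pushing $g''$ back through the substitution $Y_i\mapsto X_i^{p^{\min\{\vecC_i,\vecC'_i\}}}$ immediately gives a vector $\vecC''$ with $\vecC''_v=\vecC_v$, $\vecC''_u=\infty$, and $\vecC''_i\ge\min\{\vecC_i,\vecC'_i\}$ for free, since the substitution adds exactly $\min\{\vecC_i,\vecC'_i\}$ to each coordinate. The point is that the twist by $F^{\min\{\vecC_i,\vecC'_i\}}$ converts the awkward inseparable-degree bookkeeping you were facing into a single appeal to ``composition of separable extensions is separable,'' which is why the argument closes in one line rather than requiring a compositum inequality.
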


\begin{proof}
The first axiom from Proposition~\ref{p:valuated-circuits} follows because each
valuated circuit is constructed to have support equal to a non-valuated circuit.
The second axiom follows immediately from the construction, and the third
follows from the uniqueness of circuit polynomials.

Thus, it remains only to check (4) from Proposition~\ref{p:valuated-circuits}.
Suppose that $\vecC$ and~$\vecC'$ are valuated circuits and $u, v \in E$ are
elements satisfying the hypotheses of condition~(4). We can write $\vecC =
\vecC(f) + \lambda \onevector$ and $\vecC' = \vecC(f') + \lambda'\onevector$ for
circuit polynomials~$f$ and~$f'$ in $K[X_1, \ldots, X_n]$. Note that $\vecC(F^m
f) = \vecC(f) + m \onevector$, and so by either replacing $f$ with $F^m f$ or 
replacing $f'$ with $F^m f'$, for some integer~$m$,
we can assume that $\lambda = \lambda'$. Moreover,
since the fourth axiom only depends on the relative values of the entries of
$\vecC$ and $\vecC'$, it is sufficient to check the axiom for $\vecC$
and~$\vecC'$ replaced by $\vecC(f) = \vecC - \lambda \onevector$ and $\vecC(f')
= \vecC' - \lambda \onevector$, respectively.

We now define an injective homomorphism $\psi$ from
the polynomial ring $K[Y_1, \ldots, Y_n]$ to $K[X_1, \ldots, X_n]$ by
\begin{equation*}
\psi(Y_i) = F^{\min\{\vecC_i, \vecC_i'\}} X_i
\end{equation*}
Thus, there exist polynomials $g$ and $g'$ in $K[Y_1, \ldots
Y_n]$ such that $f = \psi(g)$ and $f' =
\psi(g')$. In particular, since $\vecC(g)_i = \vecC_i -
\min\{\vecC_i, \vecC_i'\}$ and $\vecC(g')_i = \vecC_i' - \min\{\vecC_i,
\vecC_i'\}$, then our assumptions on $u$ and $v$ imply that $\vecC(g)_u =
\vecC(g')_u = \vecC(g)_v = 0$.

Likewise, we define $y_i = F^{\min\{\vecC_i, \vecC_i'\}} x_i$ so that the
elements $y_i \in L$
satisfy the polynomials $g$ and $g'$. Thus,
Lemma~\ref{l:poly-insep-degree} shows that $g$ is separable in the variable
$Y_v$, and so if $S$ denotes the set $\supp \vecC \cup \supp \vecC'$, then
$K(y_S)$ is a separable extension of $K(y_{S \setminus \{v \}})$. Likewise,
$g'$ is separable in the variable $Y_u$ and doesn't use the variable
$Y_v$, and so $K(y_{S \setminus \{v\}})$ is a separable extension of $K(y_{S
\setminus \{v, u\}})$. Since the composition of separable extensions is
separable, $y_v$ is separable over $K(y_{S \setminus
\{v,u\}})$~\cite{lang}*{Thm.~V.4.5}.

Since algebraic extensions have transcendence degree $0$, then the field $K(y_{S
\setminus \{v, u\}})$ has the same transcendence degree over $K$ as $K(y_S)$ does,
and that transcendence degree is $\lvert S
\rvert - 2$, because we assumed that $\rank(S) = \lvert S \rvert - 2$. In
addition, we
have containments
$K(y_{S \setminus \{v,u\}}) \subset K(y_{S \setminus\{u\}}) \subset K(y_{S})$,
so that $K(y_{S \setminus \{u\}})$ also has transcendence degree $\lvert S \rvert
-2$, and therefore there exists a unique (up to scaling) polynomial relation
$g'' \in K[Y_{S \setminus \{u\}}]$ among the elements $y_i$ for $i \in S
\setminus \{u\}$. Since $y_v$ is finite and separable over $K(y_{S \setminus
\{u\}})$, $\vecC(g'')_v = 0$ by Lemma~\ref{l:poly-insep-degree}.

We claim that the $\vecC''= \vecC(\psi(g''))$ satisfies the desired conclusions
of the axiom.
First,
\begin{equation*}
\vecC_v'' = \vecC(g_{C''})_v + \min\{\vecC_v, \vecC_v'\} = 0 + \min\{\vecC_v,
\infty\} = \vecC_v,
\end{equation*}
as desired. Similarly,
\begin{equation*}
\vecC''_i = \vecC (g_{C''})_i + \min \{\vecC_i, \vecC_i'\} \geq
\min \{\vecC_i, \vecC_i'\},
\end{equation*}
and, finally, $\vecC''_u = \infty$ because $g''$ was chosen to be a polynomial
in the variables $Y_{S \setminus \{u\}}$.
\end{proof}

\begin{rmk}\label{r:computing}
The valuated circuits defined in Proposition~\ref{p:circuit-axioms} are
effectively computable from a suitable description of $L$ and the $x_i$. More
precisely, suppose $K$ is a finitely generated extension of $\mathbb F_p$ and
$L$ is given as the fraction field of $K[x_1,\ldots, x_n]/I$ for a prime
ideal~$I$.
Then $I$ can be represented in computer algebra software, and the elimination
ideals $I \cap K[x_S]$ can be computed for any subset $S \subset E$ using
Gr\"obner basis methods. The circuits of the algebraic matroid are the minimal
subsets $C$ for which $I \cap K[x_C]$ is not the zero ideal, in which case the
elimination ideal will be principal, generated by the circuit polynomial~$f_C$. By computing all of these elimination
ideals, we can determine the circuits of the algebraic matroid, and from the
corresponding generators, we get the valuated circuits
by the formula~\eqref{eq:valuated-circuits}.
\end{rmk}

\begin{ex}\label{ex:toric}
One case where the connection between the Lindstr\"om valuated matroid and linear
algebraic valuated matroids is most transparent is when the variables $x_i$ are
monomials. This example is given in~\cite{bollen-draisma-pendavingh}*{Thm.~45},
but we discuss it here in terms of our description of the valuated circuits.

We let $A$ be any $d \times n$ integer matrix, and then we take $L = K(t_1,
\ldots, t_d)$ for any field $K$ of characteristic~$p$, and we let $x_i$ be the
monomial $t_1^{A_{1i}} \cdots t_d^{A_{di}}$, whose exponents are the $i$th
column of $A$. Then the algebraic matroid of $x_1, \ldots, x_n$ is the same as
the linear matroid of the vector configuration formed by taking the columns
of~$A$. Moreover, we claim that the Lindstr\"om valuated matroid is the same as
the valuated matroid of the same vector configuration with respect to the
$p$-adic valuation on~$\mathbb Q$.

To see this, we look at the valuated circuits of both valuated matroids. A
circuit of the linear matroid is determined by an $n \times 1$ vector $\bfu$ with
minimal support such that $A \bfu = \mathbf 0$. The circuit is the support of
the vector $\bfu$, and the valuated circuit is the entry-wise $p$-adic valuation
of~$\bfu$. The support of~$\bfu$ is also a circuit of $x_1, \ldots, x_n$ with
circuit polynomial
\begin{equation*}
f = X_1^{\bfu^{(+)}_1} \cdots X_n^{\bfu^{(+)}_n} -
X_1^{\bfu^{(-)}_1} \cdots X_n^{\bfu^{(-)}_n}
\end{equation*}
where
\begin{equation*}
\bfu^{(+)}_i = \min\{0, \bfu_i\} \qquad \bfu^{(-)}_i = -\max\{0, \bfu_i\}
\end{equation*}
so that $\bfu = \bfu^{(+)} - \bfu^{(-)}$. Then, since one of
$\val_p(\bfu_i^{(-)})$ and $\val_p(\bfu_i^{(+)})$ equals $\val_p(\bfu_i)$ and
the other is infinite, $\vecC(f)$ is the same as the entry-wise $p$-adic valuation
of $\bfu$, which is the valuated circuit of the linear matroid. Thus, the
valuated circuits of the linear and algebraic matroids are the same.
\end{ex}

\begin{prop}\label{p:valuation-circuits}
The Lindstr\"om valuated matroid given by the circuits
in~\eqref{eq:valuated-circuits} agrees with the valuation~\eqref{eq:valuation}
given in the introduction.
\end{prop}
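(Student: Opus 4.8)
The plan is to recover the valuation $\omega$ on bases of the valuated matroid of Proposition~\ref{p:circuit-axioms} directly from its valuated circuits and to check that it coincides with $\nu$. First I would recall how $\omega$ is obtained from the valuated circuits (see \cite{murota-tamura}): if $B$ is a basis, $e \in E \setminus B$, and $C = C(e,B)$ is the fundamental circuit, then for every $f \in C \cap B$,
\[
\omega\big((B \setminus \{f\}) \cup \{e\}\big) - \omega(B) = \vecC_f - \vecC_e
\]
for any valuated circuit $\vecC$ with $\supp \vecC = C$, the right-hand side being independent of the choice of such $\vecC$ by axiom~(3) of Proposition~\ref{p:valuated-circuits}. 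Since the basis-exchange graph of a matroid is connected, these relations determine $\omega$ up to a single global additive constant, so it suffices to verify that $\nu$ from \eqref{eq:valuation} satisfies them, taking for $\vecC$ the vector $\vecC(f_C)$, which has support $C$ by the discussion preceding Lemma~\ref{l:poly-insep-degree}.

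The computation I would carry out is the following. Fix $B$, $e$, $C$ as above, choose $f \in C \cap B$, and write $B' = (B \setminus \{f\}) \cup \{e\}$ and $S = B \cup \{e\}$; then $S$ has rank $\lvert S \rvert - 1$ and $C$ is the unique circuit it contains, so Lemma~\ref{l:poly-insep-degree} applies. Using $S \setminus \{e\} = B$ and $S \setminus \{f\} = B'$, it gives
\[
[K(x_S) : K(x_B)]_i = p^{\vecC(f_C)_e}, \qquad [K(x_S) : K(x_{B'})]_i = p^{\vecC(f_C)_f}.
\]
Since $B$ and $B'$ are bases and all of the fields in sight lie inside $L$, the extensions $L/K(x_B)$, $L/K(x_{B'})$, $K(x_S)/K(x_B)$, $K(x_S)/K(x_{B'})$ and $L/K(x_S)$ are finite and algebraic, so the inseparable degree is multiplicative along the towers $K(x_B) \subseteq K(x_S) \subseteq L$ and $K(x_{B'}) \subseteq K(x_S) \subseteq L$ \cite{lang}. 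Writing $[L:K(x_B)]_i = [L:K(x_S)]_i\,[K(x_S):K(x_B)]_i$ and the analogue for $B'$, dividing to cancel the common factor $[L:K(x_S)]_i$, and taking $\log_p$, I obtain $\nu(B) - \nu(B') = \vecC(f_C)_e - \vecC(f_C)_f$, which is exactly the relation above.

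This would finish the proof: $\nu$ obeys the same difference relations as the valuation of the valuated matroid from Proposition~\ref{p:circuit-axioms}, hence differs from it by a global constant and represents the same valuated matroid. I do not anticipate a genuine obstacle, since Lemma~\ref{l:poly-insep-degree} and multiplicativity of the inseparable degree do essentially all the work; the step requiring care is the sign bookkeeping of Remark~\ref{r:sign-convention}, namely quoting the circuit-to-valuation relation in the min-plus normalization so that the $p$-adic valuations of exponents in \eqref{eq:c-function} carry the same sign as the logarithmic inseparable degrees in \eqref{eq:valuation}. It is worth sanity-checking the resulting relation on a small instance such as $L = K(t)$ with $x_1 = t$ and $x_2 = t^p$.
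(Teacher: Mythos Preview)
Your proposal is correct and follows essentially the same approach as the paper: both verify the fundamental circuit--basis exchange relation $\nu(B)+\vecC_u=\nu(B\setminus\{u\}\cup\{v\})+\vecC_v$ by inserting the intermediate field $K(x_{B\cup\{v\}})$, applying multiplicativity of inseparable degrees, and invoking Lemma~\ref{l:poly-insep-degree}. The only cosmetic difference is that the paper also treats the degenerate case where $B\setminus\{u\}\cup\{v\}$ fails to be a basis (both sides of the relation are then $\infty$), which you sidestep by restricting to $f\in C\cap B$; and you make explicit the connectivity of the basis-exchange graph to conclude uniqueness up to a global constant, which the paper leaves to the Murota--Tamura citation.
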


\begin{proof}
The essential relation between the valuation and the valuated circuits is that
if $B$ is a basis, $u \in B$, $v \in E \setminus B$, and $\vecC$ is a valuated
circuit whose support is contained in $B \cup \{v\}$, then:
\begin{equation}\label{eq:valuated-basis-circuit}
\nu(B) + \vecC_u = \nu(B \setminus \{u\} \cup \{v\}) + \vecC_v
\end{equation}
This relation is used at the beginning of \cite{murota-tamura}*{Sec. 3.1} to
define the valuated circuits in terms of the valuation, and in the other
direction with~(10) from~\cite{murota-tamura}.
In~\eqref{eq:valuated-basis-circuit}, we adopt the convention that $\nu(B
\setminus \{u\} \cup \{v\})$ is $\infty$ if $B \setminus \{u\} \cup \{v \}$ is
not a basis.

The only quantities in \eqref{eq:valuated-basis-circuit} which can
be infinite are $\vecC_u$ and $\nu(B \setminus \{u\} \cup \{v\})$, because if $\vecC_v$
were infinite, then $\supp \vecC$ would be contained in $B$, which contradicts $B$
being a basis. However $B \setminus \{u\} \cup \{v\}$ is not a basis if and only
the support of $\vecC$ is contained in $B \setminus \{u \} \cup \{v\}$, which is
true if and only $\vecC_u = \infty$. Therefore, the left hand side
of~\eqref{eq:valuated-basis-circuit} is infinite if and only if the right hand
side is, so for the rest of the proof, we can assume that all of the terms
of~\eqref{eq:valuated-basis-circuit}
are finite.

By the multiplicativity of inseparable degrees~\cite{lang}*{Cor.~V.6.4}, we have 
\begin{align*}
\nu(B) &= \log_p [L : K(x_B)]_i \\
&= \log_p [L : K(x_{B \cup \{v\}})]_i +
\log_p [K(x_{B \cup \{v\}}) : K(x_B)]_i \\
&= \log_p [L : K(x_{B \cup \{v\}})]_i + {\vecC_v},
\end{align*}
by Lemma~\ref{l:poly-insep-degree}. Similarly, we also have
\begin{align*}
\nu(B \setminus \{u\} \cup \{v\}) &=
\log_p [L : K(x_{B \setminus \{u\} \cup \{v\}}]_i \\
&= \log_p [L : K(x_{B \cup \{v\}})]_i 
+ \log_p [K(x_{B \cup \{v\}}) : K(x_{B \setminus \{u\} \cup \{v\}})]_i \\
&= \log_p [L : K(x_{B \cup \{v\}})]_i + \vecC_u,
\end{align*}
again, using Lemma~\ref{l:poly-insep-degree} for the last step.
Therefore,
\begin{equation*}
\nu(B) - \vecC_v = \log_p [L : K(x_{B \cup \{v\}})]_i = 
\nu(B \setminus \{u\} \cup \{v \}) - \vecC_u,
\end{equation*}
which is just a rearrangement of the desired equation
\eqref{eq:valuated-basis-circuit}.
\end{proof}

Thus, we've proved the first part of Theorem~\ref{t:agree}, namely that the
function~$\nu$ given in~\eqref{eq:valuation} defines a valuation on the
algebraic matroid $M$. In the next section, we turn to the second part of
Theorem~\ref{t:agree} and show that this valuation is compatible with the
matroid flock studied in~\cite{bollen-draisma-pendavingh}.

\section{Matroid flocks}

We now show that the matroid flock defined by the valuated matroid from the
previous section is the same as the matroid flock defined from the extension $L
\supset K$ in~\cite{bollen-draisma-pendavingh}. A \defi{matroid flock} is a
function~$M$ which maps each vector $\alpha \in \ZZ^n$ to a matroid
$M_\alpha$ on the set $E$, such that:
\begin{enumerate}
\item $M_\alpha \slash i = M_{\alpha + e_i} \backslash i$ for all $\alpha \in
\ZZ^n$ and $i \in E$,
\item $M_\alpha = M_{\alpha + \onevector}$ for all $\alpha \in \ZZ^n$.
\end{enumerate}
In the first axiom, the matroids $M_\alpha \slash i$ and $M_{\alpha + e_i} \backslash i$
are the contraction and deletion of the respective matroids with respect to the
single element~$i$.

To any valuated matroid $M$, the associated
matroid flock, which we also denote by $M$, is defined by letting $M_\alpha$ be
the matroid whose bases
consist of those bases of $M$ such that
$\mathbf e_B \cdot \alpha - \nu(B) = g(\alpha)$,
where $\mathbf e_B$ is the indicator vector with entry $(\mathbf e_B)_i = 1$ for
$i \in B$ and $(\mathbf e_B)_i = 0$
otherwise, and where
\begin{equation}\label{eq:def-g}
g(\alpha) = \max\{ \mathbf e_B \cdot \alpha - \nu(B) \mid B \mbox{ is a basis of
$M$}\}.
\end{equation}
Moreover, any matroid flock comes from a valuated matroid in this way by
Theorem~7 in~\cite{bollen-draisma-pendavingh}.

On the other hand, \cite{bollen-draisma-pendavingh} also associates a matroid
flock directly to the extension $L \supset K$ and the elements $x_1, \ldots,
x_n$. Their construction is in terms of algebraic varieties and the tangent
spaces at sufficiently general points. Here, we recast their definition using
the language of field theory and derivations. Define $\tilde L$ to be the
perfect closure of $L$, which is equal to the union
$\bigcup_{k \geq 0} L(x^{1/p^k}_1, \ldots, x_n^{1/p^k})$
of the infinite tower of
purely inseparable extensions of~$L$.
For a vector $\alpha \in \ZZ^n$, we define $F^{-\alpha} x_E$ to be the vector in
$\tilde L^n$ with $(F^{-\alpha} x_E)_i = F^{-\alpha_i} x_i$, and $K(F^{-\alpha}
x_E)$ to be the field generated by these elements. Recall from field theory,
e.g.\ \cite{lang}*{Sec. XIX.3}, that the vector space of differentials
$\Omega_{K(F^{-\alpha}x_E)/ K}$ is defined algebraically over
$K(F^{-\alpha}x_E)$, generated by the differentials $d (F^{-\alpha_i} x_i)$ as
$i$ ranges over the set~$E$. We define $N_\alpha$ to be the matroid on $E$ of
the configuration of these vectors $d(F^{-\alpha_i} x_i)$ in
$\Omega_{K(F^{-\alpha}x_E)/K}$, and then the function $N$ which sends $\alpha$
to $N_\alpha$ is a matroid flock~\cite{bollen-draisma-pendavingh}*{Thm.~34}.

\begin{proof}[Proof of Theorem~\ref{t:agree}]
The function $\nu$ is a valuation on $M$ by
Propositions~\ref{p:circuit-axioms} and~\ref{p:valuation-circuits}, so it only
remains to show that the matroid flock associated to this valuation
coincides with the matroid flock $N$ defined above.
Let $\alpha$ be a vector in $\ZZ^n$. Since both $M$ and $N$ are
matroid flocks, they are invariant under shifting $\alpha$ by the vector
$\onevector$, as in the second axiom of a matroid flock.
Therefore, we can shift
$\alpha$ by a multiple of $\onevector$ such that all entries of $\alpha$ are
non-negative and it suffices to show $M_\alpha = N_\alpha$ in this case.

Now let $B$ be a basis of $M$ and we want to show that the differentials
$d(F^{-\alpha_i}x_i)$, for $i \in B$, form a basis for $\Omega_{K(F^{-\alpha}
x_E) / K}$ if and only if ${\mathbf e_B \cdot \alpha} - \nu(B)$ equals 
$g(\alpha)$, as defined in~\eqref{eq:def-g}. Since the field $K(F^{-\alpha}
x_B)$ is generated by the algebraically independent elements $F^{-\alpha_i} x_i$
as $i$ ranges over the elements of $B$, the differentials $d(F^{-\alpha_i}x_i)$ do form a basis for
$\Omega_{K(F^{-\alpha} x_B)/K}$. Moreover, the natural map
$\Omega_{K(F^{-\alpha} x_B)/K} \rightarrow \Omega_{K(F^{-\alpha} x_E)/K}$ is an
isomorphism if and only if the $K(F^{-\alpha} x_E)$ is a separable extension of
$K(F^{-\alpha} x_B)$~\cite{lang}*{Prop.~VIII.5.2}, i.e.\ if and only if its
inseparable degree is~$1$. Therefore, $B$ is a basis for $N_\alpha$ if and only
if $[K(F^{-\alpha} x_E) : K(F^{-\alpha} x_B)]_i = 1$.

We list the inseparable degrees:
\begin{align*}
[L : K(x_B)]_i &= p^{\nu(B)} \\
[K(F^{-\alpha} x_B) : K(x_B)]_i &= p^{\mathbf e_B \cdot \alpha} \\
[K(F^{-\alpha} x_E) : K(x_E)]_i &= p^{m(\alpha)} \\
[L : K(x_E)]_i &= p^\ell
\end{align*}
The first of these equalities is by definition, the second is because
$K(F^{-\alpha} x_B)$
is the purely inseparable extension of $K(x_B)$ defined by adjoining
a $p^{\alpha_i}$-root of $x_i$ for each~$i$, and the third and fourth we take to
be the
definitions of the integers $m(\alpha)$ and~$\ell$, respectively.
By the
multiplicativity of inseparable degrees, and taking logarithms, we have:
\begin{align}\label{eq:insep-degree}
\log_p [K(F^{-\alpha} x_E) : K(F^{-\alpha} x_B)]_i &=
\log_p [K(F^{-\alpha} x_E) : K(x_B)]_i - \mathbf e_B \cdot \alpha \notag \\
&= m(\alpha) + [K(x_E) : K(x_B)]_i - \mathbf e_B \cdot \alpha \notag \\
&= m(\alpha) - \ell + \nu(B) - \mathbf e_B \cdot \alpha
\end{align}
As noted above, $B$ is a basis of $N_\alpha$ if and only if the left hand
side of \eqref{eq:insep-degree} is zero, and $B$ is a basis of $M_{\alpha}$ if
and only $\mathbf e_B \cdot \alpha - \nu(B) = g(\alpha)$.
Thus, it suffices to show that $m(\alpha) - \ell$ equals $g(\alpha)$.

Since \eqref{eq:insep-degree} is always non-negative, we have the inequality
\begin{equation*}
m(\alpha) - \ell \geq {\mathbf e_B \cdot \alpha} - \nu(B)
\end{equation*}
for all bases $B$,
and thus $m(\alpha) - \ell \geq g(\alpha)$. On the other hand, if $m(\alpha) -
\ell > g(\alpha)$, then
\eqref{eq:insep-degree} will always be positive, so no subset of the
differentials $d(F^{\alpha_i}x_i)$ will form a basis for $\Omega_{K(F^{-\alpha}
x_E)/K}$. However, this would contradict the fact that the complete set of
differentials $d(F^{-\alpha_i}x_i)$ for all $i \in E$ forms a generating set for
$\Omega_{K(F^{-\alpha} x_E)/K}$, and therefore, some subset forms a basis. Thus,
$m(\alpha)$ must equal $g(\alpha) + l$, which completes the proof that the two
matroid flocks coincide.
\end{proof}

\begin{rmk}\label{r:equivalence}
By \cite{bollen-draisma-pendavingh}*{Thm.~7}, any matroid flock, such as that of
an algebraic extension, comes from a valuated matroid, but the valuation is not
unique. In particular, two valuations $\nu$ and $\nu'$ are called
\defi{equivalent} if they differ by a shift $\nu'(B) = \nu(B) + \lambda$ for
some constant $\lambda$~\cite{dress-wenzel}*{Def.~1.1}, and equivalent valuations define the same matroid
flock. However, among all equivalent valuations giving the matroid flock of an
algebraic extension, the formula~\eqref{eq:valuation} nevertheless gives a
distinguished valuation. For example, if $L = K(x_E)$, then this distinguished
valuation $\nu$ is the unique representative such that the minimum $\min_B
\nu(B)$ over all bases $B$ is $0$. If $L$ is a proper extension of $K(x_E)$,
then the valuation $\nu$ records the inseparable degree $[L: K(x_E)]_i$, which
was denoted $p^\ell$ in the proof of Theorem~\ref{t:agree}.
\end{rmk}

\begin{ex}\label{ex:non-fano}
We look at the matroid flock and Lindstr\"om valuation of an algebraic
realization of the non-Fano matroid~$M$ over $K = \mathbb F_2$, which is a
special case of the construction
in Example~\ref{ex:toric}. The realization is given by the elements
\begin{align*}
x_1 &= t_1 &
x_3 &= t_3 &
x_5 &= t_1 t_3 &
x_7 &= t_1t_2t_3 \\
x_2 &= t_2 &
x_4 &= t_1 t_2 &
x_6 &= t_2 t_3
\end{align*}
in the field $L = K(t_1, t_2, t_3)$. The differentials of these
elements in $\Omega_{L/K}$ are:
\begin{align*}
dx_1 &= dt_1 &
dx_4 &= t_2 \,dt_1 + t_1\, dt_2 \\
dx_2 &= dt_2 &
dx_5 &= t_3\, dt_1 + t_1 \,dt_3 \\
dx_3 &= dt_3 &
dx_6 &= t_3 \, dt_2 + t_2 \,dt_3 \\
&& dx_7 &= t_2 t_3 \,dt_1 + t_1 t_3 \,dt_2 + t_1 t_2 \,dt_3.
\end{align*}
These vectors are projectively equivalent to the Fano configuration, and,
therefore, the matroid $M_{(0,0,0,0,0,0,0)}$ of the matroid flock is the Fano
matroid. In particular, we have the linear relation $t_3 \, dx_4 + t_2 \,dx_5 +
t_1 \, dx_6 = 0$, among the differentials, even though $\{4, 5, 6\}$ is a basis
of the algebraic matroid.

On the other hand, if we let $\alpha = (-1, -1, -1, 0, 0, 0, -1)$, then
$K(F^{-\alpha}x_E)$ is the subfield $K(x_4, x_5, x_6) \subset L$, because
\begin{align*}
Fx_1 = x_1^2 &= x_4 x_5 x_6^{-1} &
Fx_3 = x_3^2 &= x_4^{-1} x_5 x_6 \\
Fx_2 = x_2^2 &= x_4 x_5^{-1} x_6 &
Fx_7 = x_7^2 &= x_4 x_5 x_6 
\end{align*}
Therefore, $\{4, 5, 6\}$ is a basis for the matroid $M_\alpha$. 
Using the basis $dx_4, dx_5, dx_6$ for $\Omega_{K(F^{-\alpha}x_E)/K}$, one can
check that the vectors $d(Fx_i)$, for $i = 1, 2, 3, 7$ are all parallel to each
other, and thus the bases of $M$ which contain at least two of these indices is
not a basis for $M_{\alpha}$.

We claim that the Lindstr\"om valuation~$\nu$ of the field extension $L$ of $K$
takes the value $0$ for every basis of $M$ except that $\nu(\{4, 5, 6\}) = 1$.
This can be seen directly from the definition~\eqref{eq:valuation} because one
can check that every basis other than $\{4, 5, 6\}$ generates the field $L$, and
$L \supset K(x_4, x_5, x_6)$ is an index~$2$, purely inseparable extension.

Alternatively, the fact that the vector configuration of the differentials
$dx_i$ in $\Omega_{L/K}$ is the Fano matroid means that its bases consist of all
bases of $M$ except for $\{4, 5, 6\}$, and so the bases of the Fano matroid have
the same valuation, except for $\{4,5,6\}$, which has larger valuation. As in
Remark~\ref{r:equivalence}, the matroid flock only determines the valuation up
to equivalence, so we can take $\nu(B) = 0$ for $B$ a basis of the Fano
matroid. Then, the computation of $M_{\alpha}$ above shows that both $\{4, 5,
6\}$ and $\{3, 5, 6\}$ are bases, and thus,
\begin{equation*}
\mathbf e_{\{4, 5, 6\}}
 \cdot \alpha - \nu(\{4, 5, 6\}) =
\mathbf e_{\{3,5,6\}} \cdot \alpha - \nu(\{3, 5, 6\}) = -1 - 0 = -1
\end{equation*}
and so we can solve for $\nu(\{4, 5, 6\}) = 1$.

Finally, a third way of computing the Lindstr\"om valuation is to use
Example~\ref{ex:toric}, which shows that the valuation is the same as that of
the vector configuration given by the columns of the matrix
\begin{equation*}
A = 
\begin{pmatrix}
1 & 0 & 0 & 1 & 1 & 0 & 1 \\
0 & 1 & 0 & 1 & 0 & 1 & 1 \\
0 & 0 & 1 & 0 & 1 & 1 & 1
\end{pmatrix}
\end{equation*}
over the field of rational numbers $\QQ$ with the $2$-adic valuation. The
valuation of a vector configuration is given by the $2$-adic valuation of the
determinant of the submatrices. The submatrices of $A$ corresponding to bases
of~$M$ all have determinant $\pm 1$ except for the one with
columns $\{4,
5, 6\}$, whose determinant is $-2$, which has $2$-adic valuation equal to $1$.
\end{ex}

\section{Cocircuits and minors}\label{sec:cocircuits-minors}

In this section, we consider further properties of the Lindstr\"om valuated
matroid which can be understood in terms of the field theory of the extension.
In particular, we give constructions of the valuated cocircuits and minors of
the Lindstr\"om valuated matroid.

First, a \defi{hyperplane} of the algebraic matroid of $L$ is a maximal subset
$H$ of~$E$ such that $L$ has transcendence degree $1$ over $K(x_E)$. For any
hyperplane~$H$, we define a vector in $(\ZZ \cup
\{\infty\})^n$:
\begin{equation*}
\vecCco(H)_i = \begin{cases}
\infty &\mbox{if } i \in H \\
\log_p [L : K(x_{H \cup \{i\}})]_i & \mbox{if } i \notin H
\end{cases}
\end{equation*}
The expression in the second case is an integer by \cite{lang}*{Cor.~V.6.2} and
finite because, by the assumption that $H$ is a hyperplane, $L$ must be an
algebraic extension of $K(x_{H \cup \{i\}})$, for $i \notin H$.

\begin{prop}
The collection of vectors:
\begin{equation*}
\{\vecCco(H) + \lambda \onevector \mid
H \mbox{ is a hyperplane of the algebraic matroid of } L, \lambda \in \ZZ \}
\end{equation*}
define the cocircuits of the Lindstr\"om valuation of the field $L$ and the
elements $x_1, \ldots, x_n$.
\end{prop}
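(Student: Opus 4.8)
The plan is to identify the listed vectors with the valuated cocircuits of the Lindstr\"om valuated matroid $M$ --- already a valuated matroid by Theorem~\ref{t:agree} --- by verifying the relation between valuated cocircuits and the valuation $\nu$ that is dual to~\eqref{eq:valuated-basis-circuit}. First one checks that the collection has the right shape: the support of $\vecCco(H)$ is $E \setminus H$, a cocircuit of the algebraic matroid since cocircuits are exactly the complements of hyperplanes; the collection is closed under adding multiples of $\onevector$ by construction; and for each cocircuit $D = E \setminus H$ the vectors of the collection with support $D$ are precisely $\vecCco(H) + \lambda \onevector$, $\lambda \in \ZZ$. By duality of valuated matroids, the valuated cocircuits of $M$ are the valuated circuits of the dual valuated matroid, whose valuation sends a basis $B$ to $\nu(E \setminus B)$. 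Applying the analog of~\eqref{eq:valuated-basis-circuit} for that dual matroid and translating back, one finds that a collection with the properties just listed consists of the valuated cocircuits of $M$ provided that, for every basis $B$ of $M$, every $v \in B$, and every $u \in E \setminus B$, writing $H := \operatorname{cl}(B \setminus \{v\})$ for the hyperplane complementary to the fundamental cocircuit of $v$ with respect to $B$,
\begin{equation}\label{eq:valuated-basis-cocircuit}
\nu(B) + \vecCco(H)_u = \nu(B \setminus \{v\} \cup \{u\}) + \vecCco(H)_v ,
\end{equation}
where, as usual, $\nu$ is $\infty$ on non-bases. When $u \in H$ both sides are $\infty$ (then $\vecCco(H)_u = \infty$ and $B \setminus \{v\} \cup \{u\} \subset H$ is dependent), so I may assume $u \notin H$; then $B \setminus \{v\} \cup \{u\}$ is a basis because $B \setminus \{v\}$ is a basis of $H$ and $u \notin H$, and every term of~\eqref{eq:valuated-basis-cocircuit} is finite.

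To prove~\eqref{eq:valuated-basis-cocircuit} I would set $B' = B \setminus \{v\}$, a basis of $H$, and show that $\nu(B' \cup \{w\}) - \vecCco(H)_w$ is the same for every $w \in E \setminus H$; specializing to $w = v$ (so $B' \cup \{v\} = B$) and $w = u$ (so $B' \cup \{u\} = B \setminus \{v\} \cup \{u\}$), and subtracting, then gives~\eqref{eq:valuated-basis-cocircuit}. For fixed $w \in E \setminus H$, the element $x_w$ is transcendental over $K(x_H)$ because $H$ is a hyperplane, so $K(x_{H \cup \{w\}}) = K(x_H)(x_w)$ and $K(x_{B' \cup \{w\}}) = K(x_{B'})(x_w)$; and since $K(x_H)$ is a finite algebraic extension of $K(x_{B'})$ while $x_w$ is transcendental over $K(x_H)$, adjoining $x_w$ leaves the inseparable degree unchanged:
\begin{equation}\label{eq:insep-unchanged}
[K(x_{H \cup \{w\}}) : K(x_{B' \cup \{w\}})]_i = [K(x_H) : K(x_{B'})]_i .
\end{equation}
Granting~\eqref{eq:insep-unchanged}, multiplicativity of inseparable degrees~\cite{lang}*{Cor.~V.6.4} along $K(x_{B' \cup \{w\}}) \subset K(x_{H \cup \{w\}}) \subset L$ gives
\begin{align*}
p^{\nu(B' \cup \{w\})} = [L : K(x_{B' \cup \{w\}})]_i
&= [L : K(x_{H \cup \{w\}})]_i \cdot [K(x_{H \cup \{w\}}) : K(x_{B' \cup \{w\}})]_i \\
&= p^{\vecCco(H)_w} \cdot [K(x_H) : K(x_{B'})]_i ,
\end{align*}
using the definition of $\vecCco(H)_w$ and~\eqref{eq:insep-unchanged}. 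Hence $\nu(B' \cup \{w\}) - \vecCco(H)_w = \log_p [K(x_H) : K(x_{B'})]_i$, which does not depend on $w$, as needed.

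I expect~\eqref{eq:insep-unchanged} --- that the inseparable degree of $K(x_H)$ over $K(x_{B'})$ is unaffected by adjoining the independent transcendental $x_w$ --- to be the main obstacle in a careful write-up, though it is standard. One way: $K(x_H)$ and $K(x_{B'})(x_w)$ are linearly disjoint over $K(x_{B'})$, so adjoining $x_w$ preserves both $[K(x_H):K(x_{B'})]$ and the separable degree $[K(x_H):K(x_{B'})]_{\sep}$ (the separable closure of $K(x_{B'})$ in $K(x_H)$ remaining separable, and $K(x_H)$ remaining purely inseparable over it), whence the inseparable degree is preserved too; compare the behavior of separable and purely inseparable degrees under base change in~\cite{lang}*{Ch.~V}. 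Everything else reduces to multiplicativity of inseparable degree and elementary matroid theory.
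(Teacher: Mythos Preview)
Your proposal is correct and follows essentially the same approach as the paper: verify the dual of relation~\eqref{eq:valuated-basis-circuit} for $\vecCco(H)$, reduce via multiplicativity of inseparable degree to the claim that $[K(x_{H\cup\{w\}}):K(x_{B'\cup\{w\}})]_i=[K(x_H):K(x_{B'})]_i$ when $x_w$ is transcendental over $K(x_H)$, and conclude. The paper works in dual-basis notation $B^*=E\setminus B$ and organizes the final step as the equality $[K(x_{H\cup\{v\}}):K(x_B)]_i=[K(x_{H\cup\{u\}}):K(x_{B\setminus\{v\}\cup\{u\}})]_i$, but both reductions hinge on exactly the same field-theoretic fact that adjoining a transcendental leaves the inseparable degree unchanged; your framing ``$\nu(B'\cup\{w\})-\vecCco(H)_w$ is independent of $w$'' and your linear-disjointness justification of~\eqref{eq:insep-unchanged} are, if anything, slightly more explicit than the paper's one-line assertion.
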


\begin{proof}
By definition, the cocircuits of a valuated matroid~$M$ are the circuits of the
dual $M^*$, and the dual valuation is defined by $\nu(B^*) = \nu(E \setminus
B^*)$ for any subset $B^* \subset E$ such that $E \setminus B^*$ is a basis
of~$M$. Suppose $B^*$ and $B^* \setminus \{u\} \cup \{v\}$ are bases of $M^*$,
and $\vecCco(H)$ is a cocircuit contained in $B^* \cup \{v\}$. Then, as in the
proof of Proposition~\ref{p:valuation-circuits}, we have to show the relation:
\begin{equation}\label{eq:cocircuit}
\nu^*(B^*) + \vecCco(H)_u = \nu^*(B^* \setminus \{u\} \cup \{v\}) + \vecCco(H)_v
\end{equation}
We write $B$ for the complement $E \setminus B^*$, which is a basis of~$M$.
We can then expand these expressions using their definitions and
multiplicativity of the inseparable degree:
\begin{align*}
\nu^*(B^*) &= \log_p [L : K(x_{H \cup \{v\}})]_i
+ \log_p [K(x_{H \cup \{v\}}) : K(x_{B})]_i \\
\vecCco(H)_u &= \log_p [L : K(x_{H \cup \{u\}})]_i \\
\nu^*(B^* \setminus \{u\} \cup \{v\}) &=
\log_p [L : K(x_{H \cup \{u\}})]_i \\
&\qquad\qquad + \log_p [K(x_{H \cup \{u\}}) :
    K(x_{B \setminus \{v\} \cup \{u\}})]_i \\
\vecCco(H)_v &= \log_p [L : K(x_{H \cup \{v\}})]_i
\end{align*}
Therefore, to show the relation \eqref{eq:cocircuit}, it is sufficient to show
that 
\begin{equation}\label{eq:cocircuit-rewritten}
[K(x_{H \cup \{v\}}) : K(x_{B})]_i =
[K(x_{H \cup \{u\}}) : K(x_{B \setminus \{v\} \cup \{u\}})]_i
\end{equation}

We claim that \eqref{eq:cocircuit-rewritten} is true because both sides are
equal to the inseparable degree $[K(x_{H}) : K(x_{B \setminus \{v\}})]_i$.
Indeed, the extensions on either side of \eqref{eq:cocircuit-rewritten} are
given adjoining to the extension $K(x_H) \supset K(X_{B \setminus \{v\}})$ a
single transcendental element, namely, $x_v$ on the left, and $x_u$ on the
right. Such a transcendental element has no relations with the other elements of
$x_H$ and so doesn't affect the inseparable degree.
\end{proof}

Minors of a valuated matroid are defined in~\cite{dress-wenzel}*{Prop. 1.2
and~1.3}. Note that the definition of the valuation on the minor depends on an
auxiliary choice of a set of vectors, and the valuation is only defined up to
equivalence.

\begin{prop}
Let $F$ and $G$ be disjoint subsets of $E$. Then the minor $M \backslash G / F$,
denoting the deletion of $G$ and the contraction of~$F$,
is equivalent to the Lindstr\"om valuation of the extension $K(x_{E \setminus G}) \supset
K(x_{F})$ with the elements $x_i$ for $i \in E \setminus (F \cup G)$.
\end{prop}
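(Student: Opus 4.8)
The goal is to show that the valuated matroid $M \backslash G / F$ and the Lindstr\"om valuated matroid of the extension $K(x_{E \setminus G}) \supset K(x_F)$ with the elements $x_i$ for $i \in S := E \setminus (F \cup G)$ have the same underlying matroid and valuations that differ by a global constant; by Remark~\ref{r:equivalence} this is exactly the asserted equivalence. Write $M'$ for the latter algebraic matroid and $\nu'$ for its Lindstr\"om valuation, so that $\nu'(B') = \log_p [K(x_{E \setminus G}) : K(x_{F \cup B'})]_i$ on a basis $B'$. That the underlying matroids coincide is the standard fact that algebraic matroids commute with minors: deleting $G$ from the algebraic matroid of $L \supset K$ retains only the elements $x_i$ with $i \in E \setminus G$, and, for $B' \subseteq S$, independence of $B'$ in the contraction $M \backslash G / F$ says precisely that the $x_i$ with $i \in B'$ are algebraically independent over $K(x_F)$, which is independence in $M'$.

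The main step is to produce a workable representative of the valuation on $M \backslash G / F$. Minors of valuated matroids are built up in \cite{dress-wenzel}*{Prop.~1.2 and~1.3} by deleting and contracting one element at a time, with elements that have become loops or coloops treated separately. Unwinding this, I expect the valuation of the minor at a basis $B'$ to be, up to equivalence, $\nu(B' \cup T)$ for one fixed subset $T = F_0 \cup G_0 \subset F \cup G$, where $F_0 \subseteq F$ is a maximal algebraically independent subset and $G_0 \subseteq G$ is a maximal subset whose elements are algebraically independent over $K(x_{E \setminus G})$. The reason a single $T$ works for every basis $B'$ of the minor is that, by additivity of transcendence degree in the tower $K \subset K(x_F) \subset K(x_{E \setminus G}) \subset K(x_E)$, the union of $x_{F_0}$ (a transcendence basis of $K(x_F)$ over $K$), $x_{B'}$ (a transcendence basis of $K(x_{E \setminus G})$ over $K(x_F)$), and $x_{G_0}$ (a transcendence basis of $K(x_E)$ over $K(x_{E \setminus G})$) is a transcendence basis of $K(x_E)$, and hence of $L$, over $K$; so $B' \cup T$ is a basis of $M$ and $\nu(B' \cup T) = \log_p [L : K(x_{B' \cup T})]_i$.

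Granting such a representative, comparing $\log_p [L : K(x_{B' \cup T})]_i$ with $\nu'(B')$ is a routine application of multiplicativity of inseparable degrees \cite{lang}*{Cor.~V.6.4}. A sequence of tower factorizations, interspersed with the observation used already in the proof of the cocircuit proposition above --- that adjoining elements algebraically independent over the base field does not change the inseparable degree, applied to $x_{G_0}$ over $K(x_{E \setminus G})$ and to $x_{B'}$ over $K(x_F)$ --- expresses $[L : K(x_{B' \cup T})]_i$ as the product of $[K(x_{E \setminus G}) : K(x_{F \cup B'})]_i = p^{\nu'(B')}$ with the three factors $[L : K(x_E)]_i$, $[K(x_E) : K(x_{E \setminus G})(x_{G_0})]_i$, and $[K(x_F) : K(x_{F_0})]_i$, none of which involves $B'$. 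Hence $\log_p [L : K(x_{B' \cup T})]_i = \nu'(B') + c$ for a constant $c$, which is the desired equivalence.

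I expect the main obstacle to be the second step: unwinding the Dress--Wenzel recursion carefully, and in particular the bookkeeping for rank-dropping deletions, in which elements of $G$ become coloops and get absorbed into the contraction, so as to confirm that one fixed $T$ represents the minor valuation uniformly over all bases of the minor. Once that representative is established, the remainder is the multiplicativity computation above together with the transcendental-base-change observation already present in the paper.
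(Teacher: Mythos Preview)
Your approach is correct, but it differs from the paper's. The paper does not work with the basis valuation at all: it argues via circuits for the deletion step and via cocircuits for the contraction step. For the deletion, the valuated circuits of $M\backslash G$ are precisely the valuated circuits of $M$ with support disjoint from $G$, restricted to $E\setminus G$; these are exactly the vectors $\vecC(f_C)$ coming from circuit polynomials of $K(x_{E\setminus G})\supset K$, since those are the circuit polynomials of $L\supset K$ not involving any variable $X_i$ with $i\in G$. For the contraction, the paper invokes the cocircuit description from the preceding proposition: the valuated cocircuits of $(M\backslash G)/F$ are those cocircuits of $M\backslash G$ with support disjoint from $F$, and these correspond to hyperplanes of $K(x_{E\setminus G})\supset K$ that contain $F$, i.e.\ hyperplanes of $K(x_{E\setminus G})\supset K(x_F)$. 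No set $T$, no Dress--Wenzel recursion, and no tower computation is needed.

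Your route, by contrast, stays entirely on the level of the valuation~\eqref{eq:valuation} and is independent of the cocircuit formula; it also exhibits the equivalence constant explicitly as the sum of $\log_p$ of $[L:K(x_E)]_i$, $[K(x_E):K(x_{(E\setminus G)\cup G_0})]_i$, and $[K(x_F):K(x_{F_0})]_i$. The price is the bookkeeping you flag: one must check that a single $T=F_0\cup G_0$ represents the minor valuation uniformly. This is standard (and your transcendence-degree argument shows $B'\cup T$ is always a basis of $M$), but it is exactly the step the paper's circuit/cocircuit argument sidesteps. In short, the paper's proof is shorter because it cashes in the cocircuit proposition just proved; yours is more self-contained and would stand even without that proposition.
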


\begin{proof}
The valuated circuits of the deletion $M \backslash G$ are equal to the restriction of
the valuated circuits $\vecC$ such that $\supp \vecC \cap G = \emptyset$ to the
indices $E \setminus G$. Likewise, the circuits and circuit polynomials of the
algebraic extension
$K(x_{E'}) \supset K$ are those of $L \supset K$ such that the support and
variable indices, respectively, are disjoint from~$G$. Therefore, the valuated
circuits of the Lindstr\"om matroid of $K(x_{E \setminus G})$ as an extension of $K$ are
the same as those of the deletion~$M \backslash G$.

Dually, the valuated cocircuits of the contraction $M \backslash G / F$ are
the restrictions of the cocircuits $\vecCco$ of $M \backslash G$ such that $\supp
\vecCco \cap F = \emptyset$ to the indices in $E \setminus (F \cup G)$. The hyperplanes of
the extension $K(x_{E \setminus G}) \supset K(x_F)$ are the hyperplanes of
$K(x_{E \setminus G}) \supset K$ which contain $F$ and so the valuated cocircuits are the
valuated cocircuits which are disjoint from $F$ and with indices restricted to
the indices $E \setminus (
F \cup G)$. Therefore, the Lindstr\"om valuated matroid of $K(x_{E \setminus G}) \supset K(x_F)$ is
the same as the minor $M \backslash G / F$.
\end{proof}

\begin{bibdiv}
\begin{biblist}

\bib{bollen-draisma-pendavingh}{article}{
    author = {Bollen, Guus P.},
    author = {Draisma, Jan},
    author = {Pendavingh, Rudi},
    title = {Algebraic matroids and Forbenius flocks},
    date = {2018},
    journal = {Adv. Math.},
    %doi = {10.1016/j.aim.2017.11.006},
    volume = {323},
    pages = {688--719},
}

\bib{dress-wenzel}{article}{
    author = {Dress, Andreas W. M.},
    author = {Wenzel, Walter},
    title = {Valuated matroids},
    journal = {Adv. Math.},
    volume = {93},
    number = {2},
    date = {1992},
    pages = {214--250},
    %doi = {10.1016/0001-8708(92)90028-J},
}

\bib{ingleton}{article}{
    author={Ingleton, A. W.},
    title={Representation of matroids},
    conference={
        title={Combinatorial Mathematics and its Applications (Proc. Conf.,
               Oxford, 1969)}},
    book={
        publisher={Academic Press, London}},
    date={1971},
    pages={149--167},
}

\bib{kiraly-rosen-theran}{unpublished}{
    author = {Kir\'aly, Franz J.},
    author = {Rosen, Zvi},
    author = {Theran, Louis},
    title = {Algebraic matroids with graph symmetry},
    year = {2013},
    note = {preprint, \arxiv{1312.3777}},
}

\bib{lang}{book}{
    author = {Lang, Serge},
    title = {Algebra},
    year = {2002},
    publisher = {Springer},
    series = {Graduate Texts in Mathematics},
    volume = {211},
}

\bib{lindstrom}{article}{
    author = {Lindstr\"om, Bernt},
    title = {On the algebraic characteristic set for a class of matroids},
    journal = {Proc. AMS},
    volume = {95},
    number = {1},
    pages = {147--151},
    year = {1985},
}

\bib{murota-tamura}{article}{
    author = {Murota, Kazuo},
    author = {Tamura, Akihisa},
    title = {On circuit valuations of matroids},
    journal = {Adv. Appl. Math.},
    volume = {26},
    pages = {192--225},
    year = {2001},
    %doi = {10.1006/aama.2000.0716},
}

\end{biblist}
\end{bibdiv}

\end{document}